\newtheorem{theorem}{Theorem}[section]
\newtheorem{lemma}[theorem]{Lemma}
\newtheorem{example}[theorem]{Example}
\def\qed{\hfill $\Box$\medskip}
\def\IC{{\mathbb C}}
\def\IR{{\mathbb R}}
\def\({\left (}
\def\){\right )}
\def\tr{{\rm tr}\,}
\def\span{{\rm span}\,}
\def\M{M_{n_1}\otimes\cdots \otimes M_{n_m}}
\def\A{A_1\otimes\cdots\otimes A_m}
\begin{document}
\openup 1.1\jot


\title[Linear maps preserving   numerical radius of tensor products of matrices]
      {Linear maps preserving   numerical radius \\of tensor products of matrices}

\author{Ajda Fo\v sner}
\author{Zejun Huang}
\author{Chi-Kwong Li}
\author{Nung-Sing Sze}
\address{Ajda Fo\v sner,
Faculty of Management, University of Primorska,
Cankarjeva 5, SI-6104 Koper, Slovenia}
\email{ajda.fosner@fm-kp.si}
\address{Zejun Huang,
Department of Applied Mathematics,
The Hong Kong Polytechnic University,
Hung Hom, Hong Kong}
\email{huangzejun@yahoo.cn}
\address{Chi-Kwong Li,
Department of Mathematics, College of William and Mary, Williamsburg, VA 23187, USA;
Department of Mathematics,
University of Hong Kong, Pokfulam, Hong Kong}
\email{ckli@math.wm.edu}
\address{  Nung-Sing Sze,
Department of Applied Mathematics,
The Hong Kong Polytechnic University,
Hung Hom, Hong Kong}
\email{raymond.sze@polyu.edu.hk}

\maketitle

\begin{abstract}
Let $m,n\ge 2$ be positive integers.  Denote by $M_m$ the set of $m\times m$ complex matrices and by $w(X)$ the numerical radius of a square matrix $X$.
Motivated by the study of operations on bipartite systems of quantum states,
we show that a linear map $\phi: M_{mn}\rightarrow M_{mn}$ satisfies
 $$w(\phi(A\otimes B))=w(A\otimes B)~~{\rm for~ all}~  A\in M_{m} ~{\rm and}~B\in M_{n}$$
if and only if there is a unitary matrix $U \in M_{mn}$ and a complex unit $\xi$ such that
$$\phi(A\otimes B) = \xi U(\varphi_1(A) \otimes \varphi_2(B))U^* \quad \hbox{ for all }
A\in M_{m} ~\hbox {\rm and}~B\in M_{n},$$
where $\varphi_k$ is the identity map or the transposition map $X \mapsto X^t$ for $k=1,2$,
and the maps $\varphi_1$ and $\varphi_2$ will be of the same type if $m,n \ge 3$.
In particular, if $m,n \ge 3$, the map corresponds to an evolution of a closed quantum
system (under a fixed unitary operator), possibly followed by a transposition.
The results are extended to multipartite systems.
\end{abstract}

\-

\noindent
{\em 2010 Math. Subj. Class.}: 15A69, 15A86, 15A60, 47A12.

\noindent
{\em Key words}: Complex matrix, linear preserver, numerical range, numerical radius, tensor product.

\section{Introduction and preliminaries}

Let $M_n$ be the set of $n\times n$ complex matrices for any positive integer $n$.
For $A \in M_n$, define (and denote) its numerical range and numerical radius by
$$W(A)=\bigg\{u^*Au : u\in {\mathbb C}^n, u^*u = 1\bigg\}
\quad \hbox{ and } \quad
w(A)=\sup \{|\mu|: \mu\in W(A)\},$$
respectively. The study of numerical range and numerical radius
has a long history and is still under active research.
Moreover, there are many generalizations
motivated by pure and applied topics; see \cite{G,Ha,HJ}.

By the convexity of the numerical range,
$$W(A) = \{\tr(Auu^*): u \in \IC^n, u^*u = 1\} = \{\tr(AX): X \in D_n\},$$
where $D_n$ is the set of density matrices (positive semidefinite matrices with trace one)
in $M_n$.
In particular, in the study of quantum physics, if $A \in M_n$ is Hermitian corresponding to
an observable and if quantum states are represented as density matrices,
then $W(A)$ is the set of all possible measurements under the observables and
$w(A)$ is a bound for the measurement. If $A = A_1+iA_2$, where $A_1, A_2 \in M_n$
are Hermitian, then $W(A)$ is the set of the joint measurement of quantum states
under the two observables corresponding to $A_1$ and $A_2$.

Suppose $m,n\ge 2$ are positive integers. Denote by $ A\otimes B$ the tensor (Kronecker)
product of the matrices $A\in M_m$ and $B\in M_n$.
If $A$ and $B$ are observables of two quantum systems, then
$A\otimes B$ is an observable of the composite bipartite system.
Of course, a general observable on the composite system corresponds to
$C \in M_{mn}$, and observable of the form $A\otimes B$ with $A\in M_m$, $B\in M_n$
is a very small (measure zero) set.
Nevertheless, one may be able to extract useful information about the bipartite
system by focusing on the set of tensor product matrices. In particular,
in the study of linear operators $\phi: M_{mn}\rightarrow M_{mn}$ on bipartite systems,
the structure of $\phi$ can be determined by studying $\phi(A\otimes B)$ with $A\in M_m$, $B\in M_n$; see \cite{FLPS,FHLS,FHLS2,J} and their references.

In this paper, we determine the structure of linear maps
$\phi: M_{mn} \rightarrow M_{mn}$ satisfying  $w(A\otimes B) = w(\phi(A\otimes B))$ for all
$A\in M_m$ and $B\in M_n$. We show that for such a map
there is a unitary matrix $U \in M_{mn}$ and a complex unit $\xi$ such that
$$\phi(A\otimes B) = \xi U(\varphi_1(A) \otimes \varphi_2(B))U^* \quad \hbox{ for all }
A\in M_{m} \ \hbox { and } B\in M_{n},$$
where $\varphi_k$ is the identity map or the transposition map $X \mapsto X^t$ for $k=1,2$,
and the maps $\varphi_1$ and $\varphi_2$ will be of the same type if $m,n \ge 3$.
In particular, if $m,n \ge 3$, the map corresponds to an evolution of a closed quantum
system (under a fixed unitary operator), possibly followed by a transposition.

The study of linear maps on matrices or operators with some special properties
are known as preserver problems; for example, see \cite{LP} and its references.
In connection to preserver problems on bipartite quantum systems,
it is quite common that if one considers
a linear map $\phi:M_{mn} \rightarrow M_{mn}$ and imposes conditions
on $\phi(A\otimes B)$ for $A\in M_m$, $B\in M_n$, then
the partial transpose maps
$A \otimes B \mapsto A \otimes B^t$ and $A\otimes B \mapsto A^t\otimes B$
are admissible preservers.
It is interesting to note that for numerical radius preservers and numerical
range preservers in our study, if $m,n \ge 3$, then the partial transpose maps
are not allowed and that the (linear) numerical radius preserver $\phi$ on $M_{mn}$
will be of the standard form
$$X \mapsto \xi V^*XV \qquad \hbox{ or } \qquad X \mapsto \xi V^*X^tV$$
for some complex unit $\xi$
and unitary $V \in M_{mn}$. This is the first example of such results in this
line of study. It would be interesting to explore more matrix invariant or quantum
properties that the structure of preservers on  $M_{mn}$
can be completely determined by the behavior of the map on
the small class of matrices of the form $A\otimes B \in M_{mn}$
with $A\in M_m$, $B\in M_n$.

In our study, we also determine the linear map $\phi: M_{mn} \rightarrow M_{mn}$
such that
$$W(A\otimes B) = W(\phi(A\otimes B))\quad \hbox{ for all }
(A,B) \in M_m \times M_n.$$

We will denote by $X^t$ the transpose of a matrix $X\in M_n$ and
$X^*$ the conjugate transpose of a matrix $X\in M_n$. The $n\times n$
identity matrix will be denoted by $I_n$.
Let  $E_{ij}^{(n)} \in M_n$ be
the matrix whose $(i,j)$-entry is equal to one and all the others are equal to zero.
We simply write $E_{ij} = E_{ij}^{(n)}$ if the size of the matrix is clear.

We will prove our main result on bipartite systems in Section 2 and extend the results
to multi-partite systems in Section 3.

\section{Bipartite systems}



The following example is useful in our discussion.

\begin{example}\em \label{Ex1}  Suppose $m, n \ge 3$.
Let
$A = X \oplus O_{m-3}$ and  $B = X \oplus O_{n-3}$ with
$X=\begin{pmatrix}0 & 2 & 0 \cr 0 & 0 & 1 \cr 0 & 0 & 0 \cr\end{pmatrix}$.
Then $A \otimes B$ is unitarily similar to
$$\begin{pmatrix}0 & 1  \cr 0 & 0 \end{pmatrix}
\oplus \begin{pmatrix}0 & 1  \cr 0 & 0 \end{pmatrix}
\oplus \begin{pmatrix}0 & 2 & 0 \cr 0 & 0 & 1/2 \cr 0 & 0 & 0 \cr\end{pmatrix}
\oplus O_{mn-7},$$
and $A \otimes B^t$ is unitarily similar to
$$\begin{pmatrix}0 & 1/2  \cr 0 & 0 \end{pmatrix}
\oplus \begin{pmatrix}0 & 2  \cr 0 & 0 \end{pmatrix}
\oplus \begin{pmatrix}0 & 1 & 0 \cr 0 & 0 & 1 \cr 0 & 0 & 0 \cr\end{pmatrix}
\oplus O_{mn-7}.$$
One readily checks (see also \cite{LT})
that  $W(A\otimes B)$ and $W(A\otimes B^t) = W(A^t\otimes B)$ are
circular disks centered at the origin with radii $w(A\otimes B)$ and
$w(A\otimes B^t)$, respectively. Moreover, we have
\begin{eqnarray*}
w(A\otimes B) &=&
\lambda_{\max} (A\otimes B + (A\otimes B)^t)/2 = \sqrt{4.25} \\
&>& 2.0000 = \lambda_{\max} (A\otimes B^t + (A\otimes B^t)^t)/2 = w(A\otimes B^t).
\end{eqnarray*}
\vskip -.2in \qed
\end{example}

\bigskip
In the following, we first determine the structure of linear preservers of numerical range using the above example
and the results in \cite{FHLS}.

\begin{theorem}
\label{T2}
The following are equivalent for a linear map $\phi: M_{mn} \rightarrow M_{mn}$.
\begin{enumerate}
\item[{\rm (a)}] $W(\phi(A\otimes B)) = W(A\otimes B)$ for any $A \in M_m$ and $B \in M_n$.

\item[{\rm (b)}] There is a unitary matrix $U \in M_{mn}$ such that
$$\phi(A\otimes B) = U(\varphi_1(A) \otimes \varphi_2(B))U^*~  for~all~  A\in M_m ~and~ B\in M_n,$$
where $\varphi_k$ is the identity map or the transposition map $X \mapsto X^t$ for $k=1,2$,
and the maps $\varphi_1$ and $\varphi_2$ will be of the same type if $m,n \ge 3$.
\end{enumerate}
\end{theorem}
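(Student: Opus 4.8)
The plan is to prove the nontrivial implication (a) $\Rightarrow$ (b); the reverse is a direct verification. For (b) $\Rightarrow$ (a), recall that $W$ is invariant under unitary similarity and under transposition, since $u^*X^tu=(\bar u)^*X\bar u$ gives $W(X^t)=W(X)$. Hence when $\varphi_1,\varphi_2$ are of the same type, $\varphi_1(A)\otimes\varphi_2(B)$ equals $A\otimes B$ or $(A\otimes B)^t$ and $W$ is unchanged; when $\min\{m,n\}=2$ one checks that a $2\times2$ block is unitarily similar to its transpose, so even the partial transposes preserve $W$ pointwise.

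For (a) $\Rightarrow$ (b), I would first restrict to Hermitian arguments. If $A\in M_m$ and $B\in M_n$ are Hermitian, then $A\otimes B$ is Hermitian, so $W(A\otimes B)$ is the real segment $[\lambda_{\min}(A\otimes B),\lambda_{\max}(A\otimes B)]$. By (a), $W(\phi(A\otimes B))$ equals this segment; since a matrix with real numerical range is Hermitian, $\phi(A\otimes B)$ is Hermitian and shares the same largest and smallest eigenvalues as $A\otimes B$. In particular $\phi$ preserves the spectral radius, indeed both extreme eigenvalues, of $A\otimes B$ for all Hermitian $A,B$.

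This is exactly the hypothesis of the corresponding preserver theorem in \cite{FHLS}. Applying it yields a single unitary $U\in M_{mn}$, maps $\varphi_1,\varphi_2$ each equal to the identity or the transposition $X\mapsto X^t$, and possibly a unimodular scalar $\xi$ such that $\phi(A\otimes B)=\xi\,U(\varphi_1(A)\otimes\varphi_2(B))U^*$ for all Hermitian $A,B$. Writing an arbitrary $A\in M_m$ and $B\in M_n$ as $A=A_1+iA_2$ and $B=B_1+iB_2$ with $A_j,B_j$ Hermitian and expanding $A\otimes B$ bilinearly, the formula on Hermitian arguments together with the $\IC$-linearity of the tensor product and of $\varphi_1,\varphi_2$ extends it to all $A,B$. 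Taking $A=I_m$ and $B=I_n$ and comparing $W(\phi(I_{mn}))=\{\xi\}$ with $W(I_{mn})=\{1\}$ forces $\xi=1$, so $\phi(A\otimes B)=U(\varphi_1(A)\otimes\varphi_2(B))U^*$.

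It remains to exclude the mixed types $(\varphi_1,\varphi_2)\in\{(\mathrm{id},t),(t,\mathrm{id})\}$ when $m,n\ge3$, and this is where Example \ref{Ex1} is decisive: it exhibits $A\in M_m$ and $B\in M_n$ with $W(A\otimes B)\neq W(A\otimes B^t)=W(A^t\otimes B)$. If $\phi$ were of a mixed type, then $W(\phi(A\otimes B))=W(A\otimes B^t)\neq W(A\otimes B)$, contradicting (a); hence $\varphi_1$ and $\varphi_2$ are of the same type when $m,n\ge3$. The main obstacle I anticipate is the bridge in the third step: matching the numerical-range hypothesis to the spectral preserver theorem of \cite{FHLS} and confirming that its single unitary $U$ and its transposition data transfer intact to the present setting. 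Once that bridge is secured, the bilinear extension, the scalar normalization, and the Example-based exclusion are all short.
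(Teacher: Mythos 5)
Your proposal is correct and follows essentially the same route as the paper: restrict to Hermitian $A,B$, use $W(\phi(A\otimes B))=W(A\otimes B)\subseteq\IR$ to conclude $\phi$ preserves Hermiticity and spectral radius on tensor products of Hermitian matrices, invoke the preserver theorem of \cite{FHLS}, extend by ($\IC$-)linearity to all of $M_{mn}$, and rule out the mixed types for $m,n\ge 3$ via Example \ref{Ex1}. The only additions beyond the paper's argument are the explicit bilinear extension and the normalization of the scalar $\xi$ via $W(I_{mn})=\{1\}$, both of which are fine.
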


\begin{proof}
Suppose (b) holds. If $m, n \ge 3$, then the map has the form
$C \mapsto UCU^*$ or $C \mapsto UC^tU^*$. Thus, the condition (a) holds.
If $m = 2$, then $A^t$ and $A$ are unitarily similar for every
$A \in M_2$. So, $W(A \otimes B) = W(A^t \otimes B)$ for any $B\in M_n$. Hence, the condition (a) holds.
Similarly, if $n = 2$, then (a) holds.

Conversely, suppose that  $W(\phi(A\otimes B)) = W(A\otimes B)$ for all $A \in M_m$ and $B \in M_n$.
Assume for the moment that $A \otimes B\in M_{mn}$ is a Hermitian matrix. Then
$$W(\phi(A\otimes B)) = W(A\otimes B) \subseteq \IR.$$
This yields that $\phi(A\otimes B)$ is a Hermitian matrix, as well. Thus, $\phi$ maps Hermitian matrices to Hermitian matrices and preserves numerical radius, which is equivalent to spectral radius for Hermitian matrices. By Theorem 3.3 in \cite{FHLS}, we conclude that $\phi$ has the asserted form on Hermitian matrices and, hence, on all matrices in $M_{mn}$. However, if $m, n \ge 3$,
then, by Example \ref{Ex1}, neither the map $A\otimes B \mapsto A\otimes B^t$
nor the map $A \otimes B \mapsto A^t\otimes B$ will preserve the numerical range.
So,  the last statement about $\varphi_1$ and $\varphi_2$ holds.
\end{proof}

Next, we turn to linear preservers of the numerical radius. We need the following (well-known)
lemma to prove our result. We include a short proof for the sake of the completeness.

\begin{lemma}\label{le3}
Let $A\in M_n$ with $w(A)=|x^*Ax|=1$ for some unit $x\in \mathbf{C}^n$. Then for any unitary $U\in M_n$ with $x$ being its first column, there exists some $y\in \mathbf{C}^{n-1}$ such that
\begin{equation}\label{eq1}
U^*AU=x^*Ax\begin{pmatrix}1&y^*\\-y&* \end{pmatrix}.
\end{equation}
\end{lemma}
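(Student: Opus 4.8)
The plan is to pass to the unitarily equivalent matrix $U^*AU$, whose $(1,1)$ entry already equals the extremal quantity $x^*Ax$, and to exploit this through the Hermitian part. Since $|x^*Ax| = w(A) = 1$, the scalar $\lambda := x^*Ax$ is unimodular; setting $B := \bar\lambda\, U^*AU$ I have $w(B) = w(A) = 1$ and, because the first column of $U$ is $x$, the $(1,1)$ entry of $B$ is $\bar\lambda\,(x^*Ax) = 1$. The asserted identity \eqref{eq1} is then equivalent to the statement that $B = \begin{pmatrix} 1 & y^* \\ -y & * \end{pmatrix}$ for some $y \in \IC^{n-1}$, i.e.\ that $B_{i1} = -\overline{B_{1i}}$ for every $i \ge 2$; multiplying back by $\lambda$ recovers \eqref{eq1}.

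The key step is to analyse $H := (B + B^*)/2$. Since $w(B) = 1$ we have $W(B) \subseteq \{\mu \in \IC : |\mu| \le 1\}$, and the boundary point $1 = e_1^* B e_1$ lies in $W(B)$. As ${\rm Re}\,\mu \le |\mu| \le 1$ for every $\mu \in W(B)$, with equality only at $\mu = 1$, the value $1$ is the rightmost point of $W(B)$. Because ${\rm Re}(u^*Bu) = u^*Hu$, we have ${\rm Re}\, W(B) = W(H)$, and hence $\lambda_{\max}(H) = \max {\rm Re}\, W(B) = 1$; moreover this maximum is attained at the unit vector $e_1$, since $e_1^*He_1 = {\rm Re}(B_{11}) = 1$.

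From here I would invoke the standard fact that a unit vector $v$ satisfying $v^*Hv = \lambda_{\max}(H)$ for a Hermitian $H$ must be an eigenvector for that eigenvalue, $Hv = \lambda_{\max}(H)\,v$, which is immediate from the spectral decomposition. Applying this with $v = e_1$ gives $He_1 = e_1$, so the first column of $H$ is $e_1$ and in particular $H_{i1} = 0$ for all $i \ge 2$. Writing $H_{i1} = (B_{i1} + \overline{B_{1i}})/2$ then forces $B_{i1} = -\overline{B_{1i}}$ for $i \ge 2$, which is exactly the claimed block structure. The only nonroutine ingredient is the eigenvector observation, but it is entirely standard; everything else is bookkeeping relating $W(B)$, its rightmost point, and the Hermitian part $H$.
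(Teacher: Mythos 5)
Your argument is correct and is essentially the paper's own proof in different clothing: the paper writes $(x^*Ax)^{-1}A = G+iH$ and uses that $x$ attains the largest eigenvalue $1$ of the Hermitian part $G$ (hence is an eigenvector), which is exactly your step $He_1=\lambda_{\max}(H)e_1$ applied to $H=(B+B^*)/2=U^*GU$. No gap; the reduction to $B_{i1}=-\overline{B_{1i}}$ and the extremal-eigenvector observation are precisely the ingredients the paper uses.
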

\begin{proof}
Write
 $(x^*Ax)^{-1} A=G+iH$ with $G$ and $H$ Hermitian. Then the largest eigenvalue of $ G$  is 1 with $x$ as its corresponding eigenvector and  $U^* GU =[1]\oplus G_1$ for some $G_1\in M_{n-1}$. Moreover, the (1,1)-entry of $iU^* HU$ is 0 since $w(A)=1$.  Since $U^* HU$ is a Hermitian matrix we have $iU^* HU =\begin{pmatrix}0 & y^* \cr -y & * \cr\end{pmatrix}$. Thus, $U^*AU$ has
the claimed form.
\end{proof}


\begin{theorem}
\label{T3}
The following are equivalent for a linear map $\phi: M_{mn} \rightarrow M_{mn}$.
\begin{enumerate}
\item[{\rm (a)}] $w(\phi(A\otimes B)) = w(A\otimes B)$ for any $A \in M_m$ and $B \in M_n$.

\item[{\rm (b)}] There is a unitary matrix $U \in M_{mn}$ and a complex unit $\xi$ such that
$$\phi(A\otimes B) = \xi U(\varphi_1(A) \otimes \varphi_2(B))U^*~  for~all~  A\in M_m ~and~ B\in M_n,$$
where $\varphi_k$ is the identity map or the transposition map $X \mapsto X^t$ for $k=1,2$,
and the maps $\varphi_1$ and $\varphi_2$ will be of the same type if $m,n \ge 3$.
\end{enumerate}
\end{theorem}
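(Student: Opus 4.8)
The plan is to deduce (a)$\Rightarrow$(b) by reducing to the numerical-range statement of Theorem~\ref{T2}, after peeling off a single unimodular scalar. The reverse implication (b)$\Rightarrow$(a) is routine: the numerical radius is unchanged by multiplication by a complex unit, by unitary similarity, and by transposition, so for $m,n\ge3$ the global maps $C\mapsto\xi UCU^*$ and $C\mapsto\xi UC^tU^*$ preserve $w$; for $m=2$ or $n=2$ the mixed-type maps are admissible as well, since every matrix in $M_2$ is unitarily similar to its transpose and hence $w(A\otimes B)=w(A^t\otimes B)=w(A\otimes B^t)$ there. (By Example~\ref{Ex1} the mixed types are genuinely excluded once $m,n\ge3$.)

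The main tool for the forward direction is the identity
\[
w\bigl(\phi(A\otimes I_n)-\zeta\,\phi(I_{mn})\bigr)=w\bigl((A-\zeta I_m)\otimes I_n\bigr)=\max_{\mu\in W(A)}|\mu-\zeta|,
\]
valid for every $A\in M_m$ and $\zeta\in\IC$, which holds because $(A-\zeta I_m)\otimes I_n$ is again a tensor product and $\phi$ is linear. The first step is to show $\phi(I_{mn})=\xi I_{mn}$ for some complex unit $\xi$. Here Lemma~\ref{le3} is decisive: at a unit vector attaining the radius of $A$ it forces a rigid block structure, so that analysing the (rotationally symmetric, conical) local behaviour of $\zeta\mapsto w(\phi(A\otimes I_n)-\zeta\phi(I_{mn}))$ near such points, and letting $A$ range over sufficiently many matrices, pins $\phi(I_{mn})$ down to a unit scalar multiple of $I_{mn}$. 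Replacing $\phi$ by $\psi:=\bar\xi\,\phi$, we may assume $\psi(I_{mn})=I_{mn}$.

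With this normalisation the displayed identity becomes $w(\psi(A\otimes I_n)-\zeta I_{mn})=\max_{\mu\in W(A)}|\mu-\zeta|$ for all $\zeta$. The right-hand side is the farthest-point function of the compact convex set $W(A)$, and such a function determines the set: its behaviour as $|\zeta|\to\infty$ recovers the support function, hence the set itself. Consequently $W(\psi(A\otimes I_n))=W(A)=W(A\otimes I_n)$ for every $A$, and symmetrically $W(\psi(I_m\otimes B))=W(I_m\otimes B)$ for every $B$. Thus $\psi$ preserves the numerical range on the two families of ``marginal'' tensor products.

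It remains to promote this to $W(\psi(A\otimes B))=W(A\otimes B)$ for all pairs $(A,B)$, after which Theorem~\ref{T2} applies and yields $\psi(A\otimes B)=U(\varphi_1(A)\otimes\varphi_2(B))U^*$ with $\varphi_1,\varphi_2$ of the same type when $m,n\ge3$; then $\phi=\xi\psi$ has the asserted form. This last promotion is the principal obstacle, because the translation trick breaks for a genuine product: $A\otimes B-\zeta I_{mn}$ is no longer a tensor product, so the scalar $w$ does not by itself encode $W(\psi(A\otimes B))$. I would overcome it by first using the marginal preservation to put $\psi$ into standard form on $M_m\otimes I_n$ and on $I_m\otimes M_n$---a common unitary $U$ together with a choice of identity or transpose on each factor---and then invoking the bilinearity of $(A,B)\mapsto\psi(A\otimes B)$ to propagate the form to all products. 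The delicate points are the global consistency of the single scalar $\xi$ and the single unitary $U$ across the whole family, together with the exclusion of the partial transposes when $m,n\ge3$, which is exactly what Example~\ref{Ex1} provides.
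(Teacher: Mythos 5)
Your reverse implication and your observation that, once $\psi(I_{mn})=I_{mn}$ is known, the identity $w(\psi(A\otimes I_n)-\zeta I_{mn})=\max_{\mu\in W(A)}|\mu-\zeta|$ recovers $W(\psi(A\otimes I_n))=W(A)$ via the support function are both correct. But the two steps that carry the real weight are missing. First, the assertion that $\phi(I_{mn})=\xi I_{mn}$ can be extracted by ``analysing the local behaviour'' of $\zeta\mapsto w(\phi(A\otimes I_n)-\zeta\,\phi(I_{mn}))$ is not a proof: for a single $A$ this functional equation does not force $\phi(I_{mn})$ to be scalar (e.g.\ in a $2\times 2$ toy model $N=\mathrm{diag}(1,-1)$, $M=\mathrm{diag}(1,0)$ satisfy $w(M-\zeta N)=\max(|\zeta|,|1-\zeta|)$ for all $\zeta$), and you give no mechanism for combining the conditions over all $A$. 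In the paper this step is the bulk of the work: Lemma~\ref{le3} is used through Claims 1--3 to show that the $\phi(E_{ii}\otimes E_{jj})$ are \emph{simultaneously} unitarily similar to $\xi_{ij}E_{ii}\otimes E_{jj}$, and a further argument with $\phi(XE_{ii}X^*\otimes E_{jj})$ for all unitary $X$ yields $\phi(A\otimes E_{jj})=\varphi_j(A)\otimes E_{jj}$, whence $\phi(I_{mn})=I_m\otimes D=\tilde D\otimes I_n$ is scalar.

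Second, and more seriously, the promotion from the marginal families to all products does not go through as described. The sets $\{A\otimes I_n\}$ and $\{I_m\otimes B\}$ span only an $(m^2+n^2-1)$-dimensional subspace of $M_{mn}$, so knowing $\psi$ there, together with linearity of $\psi$, says nothing about $\psi(A\otimes B)$ for generic $A,B$; ``bilinearity of $(A,B)\mapsto\psi(A\otimes B)$'' is not an available tool, since $\psi$ is only a linear map on $M_{mn}$ and $A\otimes B$ lies outside the span of the marginals. You correctly identify this as the principal obstacle, but the proposed resolution does not overcome it. The paper instead establishes $\phi(A\otimes B)=U_{X,Y}(A\otimes B)U_{X,Y}^*$ for all Hermitian $A=XD_1X^*$, $B=YD_2Y^*$ by re-running the Claims 1--3 machinery on the rank-one projections $XE_{ii}X^*\otimes YE_{jj}Y^*$; since Hermitian tensor products span $M_{mn}$, this gives numerical-range preservation on a spanning set and only then permits the reduction to Theorem~\ref{T2} (and Example~\ref{Ex1} to exclude the partial transposes). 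To repair your argument you would need to carry out both of these constructions, at which point you have essentially reproduced the paper's proof rather than shortcut it.
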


\begin{proof}
The implication (b) $\Rightarrow$ (a) can be verified readily. Now, suppose (a) holds and let $B_{ij} = \phi(E_{ii}\otimes E_{jj})$ for $1 \le i \le m, 1 \le j \le n$.
According to the assumptions, for all $1 \le i \le m, 1 \le j \le n$, there is a unit vector $u_{ij}\in \mathbf{C}^{mn}$ and a complex unit $\xi_{ij}$
such that $u_{ij}^*B_{ij}u_{ij}=\xi_{ij}$. We will first show that there exists a unitary matrix $U\in M_{mn}$ and complex units $\xi_{ij}$
such that $$B_{ij}= \xi_{ij} U(E_{ii}\otimes E_{jj})U^*, \qquad 1 \le i \le m, 1 \le j \le n.$$
We divide the proof into several claims.


\vspace{0,4cm}

\noindent
{\bf Claim 1.}
Suppose  $(i,j) \ne (r,s)$. If $u \in \IC^{mn}$ is a unit vector such that $|u^*B_{ij}u| = 1$, then  $B_{rs}u = 0$.

\vspace{0,2cm}

\it Proof. \rm Suppose $u$ is a unit vector such that $ u^*B_{ij}u  = e^{i\theta}$. Let $\xi = u^*B_{rs}u$.
We first consider the case if $i = r$.
For any $\mu \in \IC$ with $|\mu| \le 1$,
\begin{equation} \label{ck-1}
1 = w(E_{ii} \otimes (E_{jj} + \mu E_{ss})) =  w(B_{ij} + \mu B_{rs})
\ge  |u^*(B_{ij} + \mu B_{rs})u|
= |e^{i\theta} + \mu \xi|.
\end{equation}
Then we must have $\xi = 0$. Otherwise, $|e^{i\theta} + \mu \xi |
= |e^{i\theta} + e^{i\theta} |\xi|| > 1$
if one chooses $\mu = e^{i\theta} \bar \xi / |\xi|$.
Suppose $\xi = 0$. Then the inequality in (\ref{ck-1})
become equality and  $w(B_{ij} + \mu B_{rs}) = |u^*(B_{ij} +\mu B_{rs})u| = 1$.
By Lemma \ref{le3}, there is $y_\mu \in \IC^{mn-1}$ such that
$$U^*B_{ij}U + \mu U^*B_{rs} U = U^*(B_{ij} + \mu B_{rs})U = e^{i\theta}
\begin{pmatrix} 1 & y_\mu^* \cr -y_\mu & * \end{pmatrix},$$
where $U$ is a unitary matrix with $u$ as its first column.
Since the above equation holds for any $\mu \in \IC$ with $|\mu| \le 1$,
the matrix $U^*B_{rs}U$ must have the form $\begin{pmatrix} 0 & 0 \cr 0 & * \end{pmatrix}$. 
So, $U^*B_{rs}U$ is a matrix with zeros in its first column and row,
or equivalently,  $B_{rs} u = 0$, as desired.
Similarly, we can prove the case if $j = s$.

Now we consider the case when $i\ne r$ and $j \ne s$.
By the previous argument, we have $B_{is} u = B_{rj} u = 0$.
Then for any $\mu \in \IC$ with $|\mu| \le 1$,
\begin{eqnarray*} 
1 &=& w( (E_{ii} + E_{rr}) \otimes (E_{jj} + \mu E_{ss})) \cr
&=&  w(B_{ij} + B_{rj} + \mu (B_{is} + B_{rs})) \cr
&\ge&  |u^*(B_{ij} + B_{rj} + \mu (B_{is} + B_{rs}))u| \cr
&=& |u^*B_{ij}u + \mu u^*B_{rs}u|\cr
&=& |e^{i\theta} + \mu \xi|.
\end{eqnarray*}
It follows that $\xi = u^*B_{rs} u = 0$ and hence
$w(B_{ij} + B_{rj} + \mu (B_{is} + B_{rs})) = |u^*B_{ij} u| = 1$.
By Lemma \ref{le3}, we conclude that $(B_{is} + B_{rs})u = 0$ and thus, $B_{rs} u = 0$.
\qed

\vspace{0,4cm}

\noindent
{\bf Claim 2.}
Suppose $(i,j) \ne (r,s)$. If $u_{ij}, u_{rs} \in \IC^{mn}$
are two unit vectors such that
$|u_{ij}^*B_{ij} u_{ij}| = |u_{rs}^*B_{rs} u_{rs}| = 1$,
then $u_{ij}^* u_{rs} = 0$.

\vspace{0,2cm}

\it Proof. \rm Suppose $u_{rs} =  \alpha u_{ij} + \beta v$
with $\alpha = u_{ij}^* u_{rs}$ and $\beta = v^*u_{rs}$,
where $v$ is a unit vector orthogonal to $u_{ij}$.
Notice that $|\alpha|^2 + |\beta|^2 = 1$.
By Claim 1, $u_{ij}^*B_{rs} = 0$ and $B_{rs} u_{ij} = 0$ and so
$$1 = |u_{rs}^*B_{rs} u_{rs} | =  |\beta|^2 |v^* B_{rs} v| \le |\beta|^2 w(B_{rs}) = |\beta|^2 \le 1.$$
Thus, $|\beta| = 1$ and hence $u_{ij}^* u_{rs} = \alpha = 0$.
\qed

\vspace{0,4cm}

\noindent
{\bf Claim 3.}
Let  $U=[u_{11}~\cdots~u_{1n}~u_{21}\cdots~u_{2n}~\cdots~u_{m1}~\cdots~u_{mn}].$  Then $U^*U = I_{mn}$ and
$U^*B_{ij}U = \xi_{ij} (E_{ii}\otimes E_{jj})$
for all $1\le i\le m$ and $1\le j\le n$.

\vspace{0,2cm}
\it Proof. \rm By Claim 2, $\{u_{ij}: 1\le i\le m, 1\le j\le n\}$ forms an orthonormal basis. Thus, $U^*U = I_{mn}$.
Next by Claim 1, $u_{rs}^* B_{ij} u_{k\ell} = 0$ for all $(r,s)$ and $(k,\ell)$,
except the case when $(r,s) = (k,\ell) = (i,j)$.
Therefore, the result follows.
\qed

According to our assumptions and by Claims $1,2,3$, we see that up to some unitary similarity
$$\phi(E_{ii}\otimes E_{jj})= \xi_{ij} (E_{ii}\otimes E_{jj})$$
for $1 \le i \le m, 1 \le j \le n$ and some complex units $\xi_{ij}$. Now, for any unitary $X \in M_m$, using the same arguments as above, there exists some unitary $U_X$ and some complex units $\mu_{ij}$ such that
$$\phi(XE_{ii}X^* \otimes E _{jj}) = \mu_{ij} U_X(E_{ii} \otimes E _{jj})U_X^*$$
for all $1\le i \le m$ and $1\le j\le n$.
So, $\phi(XE_{ii}X^*\otimes E_{jj})$ is a unit multiple of rank one Hermitian matrix with numerical radius one. Thus,  
$\phi(XE_{ii}X^*\otimes E_{jj}) = \mu_{ij} xx^*$ for some unit vector $x\in \IC^{mn}$.
Note also that $\phi(I_m \otimes E_{jj}) = D \otimes E_{jj}$ for some diagonal unitary matrix $D$.
If  $\gamma > 0$, then
$$w(\phi((XE_{ii}X^*+\gamma I_m)\otimes E_{jj})) = 1+\gamma.$$
Furthermore, there exists a unit vector $u\in \IC^{mn}$ such that $|u^*x| = 1$ and $|u^* (D\otimes E_{jj}) u| = 1$.
From the second equality, $u$ must have the from $u = \hat u \otimes e_j$ for some unit vector $\hat u\in \IC^m$. Therefore, $|u^*x| = 1$ implies $x = \hat x \otimes e_j$ for some unit vector $\hat x \in \IC^m$. Thus, $\phi(XE_{ii}X^*\otimes E_{jj})$ has the form $R_{i,X} \otimes E_{jj}$ for some $R_{i,X} \in M_m$.
Since this is true for any $1\le i \le m$ and unitary $X\in M_m$,
we have $$\phi(A\otimes E_{jj}) = \varphi_j(A)\otimes E_{jj}$$
for all matrices $A\in M_m$ and some linear map $\varphi_j$. Clearly, $\varphi_j$ preserves numerical radius and, hence, has the form
$$A \mapsto \xi_j W_jAW_j^* \quad \hbox{ or } \quad A \mapsto \xi_j W_jA^tW_j^*$$ for some complex unit $\xi_j$ and unitary $W_j \in M_m$.
In particular, $\varphi_j(I_m)=\xi_j I_m$ and $\phi(I_{mn}) = I_m\otimes D$ for some diagonal matrix $D\in M_n$. Using the same arguments as above, we can show that $$\phi(E_{ii}\otimes B) = E_{ii}\otimes \varphi_i(B)$$ for all  matrices $B\in M_n$ and some linear map $\varphi_i$ of the form $$B \mapsto \tilde{\xi_i} \tilde{W_i}B\tilde{W_i}^* \quad \hbox{ or } \quad B \mapsto \tilde{\xi_i}\tilde{W_i}B^t\tilde{W_i}^*,$$ where $\tilde{\xi_i}$ is a complex unit and $\tilde{W_i} \in M_n$ a unitary matrix. Therefore, we have $\varphi_i(I_n)=\tilde{\xi_i} I_n$ and $\phi(I_{mn}) = \tilde{D}\otimes I_n$ for some diagonal matrix $\tilde{D}\in M_m$.
Since $\phi(I_{mn}) = \tilde D\otimes I_n =I_m\otimes D$, we conclude that  $\phi(I_{mn})=\xi I_{mn}$ for some complex unit $\xi$. For the sake of the simplicity,
let us assume that $\phi(I_{mn})= I_{mn}$. Then $\phi(E_{ii} \otimes E_{jj}) = E_{ii} \otimes E_{jj}$ for all $1\le i \le m$, $1\le j \le n$.

For any Hermitian matrices $A\in M_m$ and $B\in M_n$, suppose their spectral decompositions are $A=XD_1X^*$ and $B=YD_2Y^*$. Repeating the above argument and using the assumption $\phi(I_{mn})= I_{mn}$, one sees that there exists a unitary matrix $U_{X,Y}$ such that
$$\phi(XE_{ii}X\otimes YE_{jj}Y^*)=U_{X,Y}(XE_{ii}X^*\otimes YE_{jj}Y^*)U_{X,Y}^*,\qquad 1\le i \le m,  1\le j \le n,$$
and, hence, $\phi(A\otimes B)=U_{X,Y}(A\otimes B)U_{X,Y}^*$.
So, $\phi$ maps Hermitian matrices to Hermitian matrices and preserves numerical range on the tensor product of Hermitian matrices. Thus, by the same argument as in the proof of Theorem \ref{T2}, $\phi$ has the asserted form on Hermitian matrices
and, hence, on all matrices in $M_{mn}$.
If $m, n \ge 3$, we can use Example \ref{Ex1} to conclude
that $\varphi_1$ and $\varphi_2$ should both be the identity map, or both be the transpose map. The proof is completed.
\end{proof}

\section{Multipartite systems}

In this section we  extend   Theorem \ref{T2} and Theorem \ref{T3} to multipartite systems $\M$, $m\ge 2$.


\begin{theorem}
\label{T5}
Let $n_1, \dots, n_m \ge 2$ be positive integers and $N = \prod_{j=1}^m n_j$.
The following are equivalent for a linear map $\phi: M_N \rightarrow M_N$.
\begin{enumerate}
\item[{\rm (a)}] $W(\phi(\A)) = W(\A)$ for any $(A_1, \dots , A_m) \in M_{n_1} \times \cdots \times M_{n_m}$.

\item[{\rm (b)}] There is a unitary matrix $U \in M_{N}$ such that
\begin{equation}\label{eq2}
\phi(\A) = U(\varphi_1(A_1)\otimes \cdots\otimes\varphi_m(A_m))U^*
\end{equation}
for all $(A_1, \dots , A_m) \in M_{n_1} \times \cdots \times M_{n_m}$, where $\varphi_k$
is the identity map or the transposition map $X \mapsto X^t$ for $k=1,\ldots,m$,
and the maps $\varphi_j$
are of the same type for those $j$'s such that $n_j\geq 3$.
\end{enumerate}
\end{theorem}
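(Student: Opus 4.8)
The implication (b) $\Rightarrow$ (a) I would dispose of first, as routine. If every $\varphi_j$ with $n_j\ge 3$ is the identity, then for each factor with $n_j=2$ the matrices $A_j$ and $\varphi_j(A_j)\in\{A_j,A_j^t\}$ are unitarily similar, so $\varphi_1(A_1)\otimes\cdots\otimes\varphi_m(A_m)$ is unitarily similar to $\A$ via a tensor product of unitaries and therefore shares its numerical range. If instead every $\varphi_j$ with $n_j\ge 3$ is the transposition, the same remark shows that $\varphi_1(A_1)\otimes\cdots\otimes\varphi_m(A_m)$ is unitarily similar to $(\A)^t$, and $W(C^t)=W(C)$ for every square $C$, so again the numerical range is unchanged. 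Conjugating by the fixed unitary $U$ preserves $W$, giving (a).

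For (a) $\Rightarrow$ (b) the plan is to follow the scheme of the proof of Theorem \ref{T2}. When each $A_j$ is Hermitian, $\A$ is Hermitian, so $W(\A)\subseteq\IR$ forces $W(\phi(\A))\subseteq\IR$, whence $\phi(\A)$ is Hermitian. Thus $\phi$ carries Hermitian decomposable tensors to Hermitian matrices and preserves their numerical radius, which on Hermitian matrices equals the spectral radius. Invoking the multipartite Hermitian version of the spectrum/spectral-radius preserver result of \cite{FHLS}, I would obtain a unitary $U\in M_N$ and maps $\varphi_1,\dots,\varphi_m$, each the identity or the transposition, with $\phi(\A)=U(\varphi_1(A_1)\otimes\cdots\otimes\varphi_m(A_m))U^*$ for all Hermitian $A_1,\dots,A_m$. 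Since both sides are multilinear in $(A_1,\dots,A_m)$ and each $A_j=H_j+iK_j$ is a combination of two Hermitian matrices, this identity extends to all $(A_1,\dots,A_m)\in M_{n_1}\times\cdots\times M_{n_m}$, yielding the form \eqref{eq2}.

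It remains to verify that the $\varphi_j$ are of one common type over all indices $j$ with $n_j\ge 3$, and here I would use a multipartite incarnation of Example \ref{Ex1}. Suppose for contradiction that $n_p,n_q\ge 3$ while $\varphi_p$ is the identity and $\varphi_q$ is the transposition. Specialize $A_j=E_{11}^{(n_j)}$ for $j\ne p,q$ (these are symmetric, hence fixed by every $\varphi_j$) and take $A_p,A_q$ to be the nilpotents $X\oplus O$ of Example \ref{Ex1}. After a permutation of the tensor factors (a unitary similarity), tensoring with the remaining rank-one projections amounts to a direct sum with a zero block; as the relevant numerical ranges in Example \ref{Ex1} are disks centered at the origin, that extra block leaves them unchanged. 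Hence $W(\phi(\A))=W(A_p\otimes A_q^t)$ whereas $W(\A)=W(A_p\otimes A_q)$, and Example \ref{Ex1} gives $w(A_p\otimes A_q)=\sqrt{4.25}\ne 2=w(A_p\otimes A_q^t)$, contradicting (a). So all $\varphi_j$ with $n_j\ge 3$ share one type, completing (b).

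The main obstacle is the Hermitian structure step: the proof of Theorem \ref{T2} offloads the corresponding bipartite statement to \cite{FHLS}, and the cleanest route here is an analogous multipartite preserver theorem for the spectral radius of $\A$ on Hermitian factors. If such a statement is not available off the shelf, I would establish it either by induction on $m$ or by repeating the orthogonality analysis of Claims 1--3 in the proof of Theorem \ref{T3}, now applied to the diagonal tensors $E_{i_1 i_1}^{(n_1)}\otimes\cdots\otimes E_{i_m i_m}^{(n_m)}$: first pin down $\phi$ on these rank-one tensors up to a single unitary similarity and unimodular scalars, then propagate the product structure by conjugating with unitaries acting on one factor at a time. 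Everything else — the Hermitian reduction, the multilinear extension, and the transposition obstruction — is a direct adaptation of the bipartite arguments.
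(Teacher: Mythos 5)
Your proposal is correct and follows essentially the same route as the paper: reduce to Hermitian tensors (where $W(\A)\subseteq\IR$ forces $\phi$ to preserve Hermiticity and the spectral radius), invoke the multipartite Hermitian preserver theorem from \cite{FHLS} --- which does exist off the shelf as Theorem 3.4 there, exactly as the paper cites --- and then rule out mixed types via Example \ref{Ex1} padded with rank-one projections $E_{11}$ on the remaining factors. The only difference is that you hedge on the availability of the multipartite \cite{FHLS} result and sketch a fallback, which turns out to be unnecessary.
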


\begin{proof}
The sufficient part is clear.  For the converse,  as in the proof of Theorem \ref{T2}, consider
Hermitian matrix $A = A_1 \otimes \cdots \otimes A_m $ with $A_j \in H_{n_j}$ for $j = 1, \dots, m$.
By \cite[Theorem 3.4]{FHLS}, $\phi$ has the asserted form on Hermitian matrices and, hence, on all matrices in $\M$.

However, if $n_i, n_j \ge 3$ with $i<j$,  let
$A_i = X \oplus O_{n_i-3}$ and  $A_j = X \oplus O_{n_j-3}$, where $X$ is defined as in
Example \ref{Ex1}, and
$A_k=E_{11}\in M_{n_k}$ for $k\ne i,j$.
Then $w(\A) = \sqrt{4.25}$
and $w(A_1\otimes\cdots\otimes A_{j-1}\otimes A_j^t\otimes A_{j+1}\otimes\cdots\otimes A_m)
=  2.0000$.
Thus,
$$W(\A) \ne W(A_1\otimes\cdots\otimes A_{j-1}\otimes A_j^t\otimes A_{j+1}\otimes\cdots\otimes A_m),$$
and we see that the last statement about $\varphi_k$ holds.
\end{proof}


\begin{theorem}
\label{T6}
Let $n_1, \dots, n_m \ge 2$ be positive integers and $N = \prod_{j=1}^m n_j$.
The following are equivalent for a linear map $\phi: M_N \rightarrow M_N$.
\begin{enumerate}
\item[{\rm (a)}] $w(\phi(\A)) = w(\A)$ for any $(A_1, \dots , A_m) \in M_{n_1} \times \cdots \times M_{n_m}$.

\item[{\rm (b)}] There is a unitary matrix $U \in M_N$ and a complex unit $\xi$ such that
\begin{equation}\label{eq2b}
\phi(\A) =\xi U(\varphi_1(A_1)\otimes \cdots\otimes\varphi_m(A_m))U^*
\end{equation}
for all $(A_1, \dots , A_m) \in M_{n_1} \times \cdots \times M_{n_m}$, where $\varphi_k$ is the identity map or the transposition map $X \mapsto X^t$ for $k=1,\ldots,m$,
and the maps $\varphi_j$ are of the same type for those $j$'s such that $n_j\geq 3$.
\end{enumerate}
\end{theorem}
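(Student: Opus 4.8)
The plan is to follow the proof of Theorem \ref{T3} step by step, replacing the two-index family $\{E_{ii}\otimes E_{jj}\}$ by the multi-index family of rank-one diagonal tensors, and then to finish by reducing the numerical radius condition to the numerical range condition already settled in Theorem \ref{T5}. The implication (b) $\Rightarrow$ (a) is immediate, since each $\varphi_k$ preserves the numerical radius on its own factor and $w$ is invariant under unitary similarity and multiplication by a complex unit. So I focus on (a) $\Rightarrow$ (b).

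First I would set $B_I = \phi(E_{i_1 i_1}\otimes\cdots\otimes E_{i_m i_m})$ for each multi-index $I=(i_1,\dots,i_m)$ with $1\le i_k\le n_k$, and record that $w(B_I)=1$, so there is a unit vector $u_I$ with $|u_I^*B_I u_I|=1$. The technical heart is to prove the multipartite analogues of Claims 1--3: if $I\ne J$ and $|u^*B_I u|=1$ then $B_J u=0$; the vectors $u_I$ are pairwise orthogonal; and, assembling them into a unitary $U=[u_I]$, one gets $U^*B_I U = \xi_I\,(E_{i_1 i_1}\otimes\cdots\otimes E_{i_m i_m})$ for complex units $\xi_I$. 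The Claim 1 argument carries over: letting $S\subseteq\{1,\dots,m\}$ be the set of coordinates where $I$ and $J$ differ, I would first handle the indices obtained by flipping a single coordinate, then combine them by testing $\phi$ on diagonal tensors built from $E_{i_k i_k}+\mu E_{j_k j_k}$ in the coordinates of $S$, using $1=w(\cdots)\ge|u^*(\cdots)u|$ together with Lemma \ref{le3} to force the off-index contributions to vanish. Claim 2 then follows from Claim 1 exactly as before, and Claim 3 assembles the orthonormal basis.

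After conjugating by $U$, I may assume $\phi\bigl(\bigotimes_k E_{i_k i_k}\bigr)=\xi_I\,\bigotimes_k E_{i_k i_k}$. Next, fixing all but the $k$-th tensor factor at diagonal rank-one projections and letting the $k$-th factor range over $X E_{ii}X^*$ for unitary $X\in M_{n_k}$, the same rank-one/numerical-radius bookkeeping as in Theorem \ref{T3} shows that $\phi$ acts ``locally'' on the $k$-th slot, so that the induced map there is a numerical radius preserver on $M_{n_k}$ and hence of the standard form $A\mapsto\xi_k W_k A W_k^*$ or $A\mapsto\xi_k W_k A^t W_k^*$. Comparing the expressions for $\phi(I_N)$ obtained from different slots forces $\phi(I_N)=\xi I_N$ for a single complex unit $\xi$; after scaling by $\bar\xi$ I may assume $\phi(I_N)=I_N$, whence $\phi\bigl(\bigotimes_k E_{i_k i_k}\bigr)=\bigotimes_k E_{i_k i_k}$.

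Finally I would reduce to the numerical range case. With $\phi(I_N)=I_N$, writing Hermitian $A_k=X_k D_k X_k^*$ and repeating the local argument produces a unitary $U_{X_1,\dots,X_m}$ with $\phi(\A)=U_{X_1,\dots,X_m}(\A)U_{X_1,\dots,X_m}^*$ on Hermitian tensors; in particular $\phi$ sends Hermitian matrices to Hermitian matrices and preserves the numerical \emph{range} on tensor products of Hermitian matrices. As in the proof of Theorem \ref{T5}, \cite[Theorem 3.4]{FHLS} then yields the global tensor form on Hermitian matrices, hence on all of $M_N$ by linearity, and the example used in the proof of Theorem \ref{T5} rules out mixed types when $n_i,n_j\ge 3$, giving the ``same type'' conclusion. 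I expect the main obstacle to be the multipartite version of Claim 1: keeping track of which coordinates to vary and choosing the right combinations of diagonal tensors so that Lemma \ref{le3} can be applied repeatedly to annihilate all the off-index blocks simultaneously.
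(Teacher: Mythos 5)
Your proposal is correct and follows essentially the same route as the paper: the paper proves the multipartite analogues of Claims 1--3 (stated as Claims 4--6 for the tripartite case, with the case analysis organized by the number of coordinates in which the two multi-indices differ, exactly as you describe), then runs the slot-by-slot local argument to get $\phi(I_N)=\xi I_N$, and finally reduces to the Hermitian/numerical-range case settled by Theorem \ref{T5} together with the example to rule out mixed types.
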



\begin{proof}
The sufficiency part is clear. We verify the necessity. First of all, one can use similar arguments as in the proof of Theorem \ref{T3}
(see Claim 1, Claim 2, and Claim 3) to show that up to some unitary similarity,
\begin{equation*}
\label{E}
\phi(E_{i_1i_1} \otimes E _{i_2i_2}\otimes \cdots \otimes E_{i_mi_m}) =
\xi_{i_1i_2\cdots i_m} (E_{i_1i_1} \otimes E _{i_2i_2}\otimes \cdots \otimes E_{i_mi_m})
\end{equation*}
for all $1\le i_k\le n_k$ with $1\le k\le m$ and some complex units $\xi_{i_1i_2\cdots i_m}$. Below we give the details
of the proof for the case $M_{n_1} \otimes M_{n_2} \otimes M_{n_3}$. One readily extends the arguments to the general case.


\vspace{0,2cm}

Denote $B_{ijk} = \phi(E_{ii} \otimes E_{jj} \otimes E_{kk})$ for $1 \le i \le n_1, 1 \le j \le n_2, 1 \le k \le n_3$ and let $N=n_1n_2n_3$.
According to the assumptions, for any $1 \le i \le n_1, 1 \le j \le n_2, 1 \le k \le n_3$, there is a unit vector $u_{ijk}\in \mathbf{C}^N$
and a complex unit $\xi_{ijk}$ such that $u_{ijk}^*B_{ijk}u_{ijk}=\xi_{ijk}$.


\vspace{0,4cm}

\noindent
{\bf Claim 4.}
Suppose  $(i,j,k) \ne (r,s,t)$. If $u \in \IC^N$ is a unit vector such that $|u^*B_{ijk}u| = 1$, then  $B_{rst}u = 0$.

\vspace{0,2cm}

\it Proof. \rm Suppose $u$ is a unit vector such that $ u^*B_{ijk}u  = e^{i\theta}$. Let $\xi = u^*B_{rst}u$.
First, assume that $\delta_{ir}+\delta_{js}+\delta_{kt}=2$, where $\delta_{ab}$ equals to 1 when $a=b$ and zero otherwise. In other words, we assume that exactly two of the three
sets $\{i,r\}, \{j,s\}, \{k,t\}$ are singletons.
Without loss of generality, assume that $i = r$, $j = s$, and $k \ne t$.
For any $\mu \in \IC$ with $|\mu| \le 1$,
$$1 =  w(B_{ijk} + \mu B_{rst})
\ge  |u^*(B_{ijk} + \mu B_{rst})u|
= |e^{i\theta} + \mu \xi|.$$
Then we must have $\xi = 0$. Furthermore,
$w(B_{ijk} + \mu B_{rst}) = |u^*(B_{ijk} +\mu B_{rst})u| = 1$.
By Lemma \ref{le3}, one conclude that
$U^*B_{rst}U$ has the form $\begin{pmatrix} 0& 0 \cr 0 & * \end{pmatrix}$,
where $U$ is a unitary matrix with $u$ as its first column,
and hence $B_{rst} u = 0$.

\medskip
Next, suppose $\delta_{ir}+\delta_{js}+\delta_{kt}=1$, say $i = r$.
By the previous case, $B_{isk} u =  B_{ijt} u = 0$.
Then for any $\mu \in \IC$ with $|\mu| \le 1$,
\begin{eqnarray*} 
1 &=&  w(B_{ijk} + B_{isk} + \mu (B_{ijt} + B_{ist})) \cr
&\ge&  |u^*(B_{ijk} + B_{isk} + \mu (B_{ijt} + B_{ist}))u| \cr
&=& |u^*B_{ijk}u + \mu u^*B_{ist}u|
= |e^{i\theta} + \mu \xi|.
\end{eqnarray*}
It follows that $\xi = u^*B_{ist} u = 0$ and hence
$w(B_{ijk} + B_{isk} + \mu (B_{ijt} + B_{ist})) = |u^*B_{ijk} u| = 1$.
By Lemma \ref{le3}, we conclude that $(B_{ijt} + B_{ist})u = 0$ and thus, $B_{ist} u = 0$.

\medskip
Finally, suppose $\delta_{ir}+\delta_{js}+\delta_{kt}=0$.
By the previous cases,
$B_{isk} u = B_{rjk} u = B_{rsk} u = B_{ijt} u = B_{ist} u = B_{rjt} u = 0$.
Then for any $\mu \in \IC$ with $|\mu| \le 1$,
\begin{eqnarray*} 
1 &=&  w((B_{ii}+B_{rr})\otimes (B_{jj}+B_{ss})\otimes (B_{kk}+\mu B_{tt})) \cr
&\ge&  |u^*((B_{ii}+B_{rr})\otimes (B_{jj}+B_{ss})\otimes (B_{kk}+\mu B_{tt}))u| \cr
&=& |u^*B_{ijk}u + \mu u^*B_{rst}u|
= |e^{i\theta} + \mu \xi|.
\end{eqnarray*}
Then by a similar argument, one conclude that $\xi = 0$ and $B_{rst} u = 0$.
\qed

\vspace{0,4cm}

\noindent
{\bf Claim 5.}
Suppose $(i,j,k) \ne (r,s,t)$. If $u_{ijk}, u_{rst} \in \IC^{mn}$
are two unit vectors such that \linebreak
$|u_{ijk}^*B_{ijk} u_{ijk}| = |u_{rst}^*B_{rst} u_{rst}| = 1$,
then $u_{ijk}^* u_{rst} = 0$.

\vspace{0,2cm}

\it Proof. \rm Suppose $u_{rst} =  \alpha u_{ijk} + \beta v$
with $\alpha = u_{ijk}^* u_{rst}$ and $\beta = v^*u_{rst}$,
where $v$ is a unit vector orthogonal to $u_{ijk}$.
Note that $|\alpha|^2 + |\beta|^2 = 1$.
By Claim 1, $u_{ijk}^*B_{rst} = 0$ and $B_{rst} u_{ijk} = 0$ and so
$$1 = |u_{rst}^*B_{rst} u_{rst} |
= |\beta|^2 |v^* B_{rst} v| \le |\beta|^2 w(B_{rst}) = |\beta|^2 \le 1.$$
Thus, $|\beta| = 1$ and hence $u_{ijk}^* u_{rst} = \alpha = 0$.
\qed

\vspace{0,4cm}

\noindent
{\bf Claim 6.}
Let $$U=[u_{111}~\cdots~u_{11n_3}~u_{121}~\cdots~u_{12n_3}~\cdots~u_{1n_2n_3}
~\cdots~u_{211}~\cdots~u_{2n_2n_3}~\cdots~u_{n_111}~\cdots~u_{n_1n_2n_3}].$$
Then $U^*U = I_N$ and
$U^*B_{ijk}U = \xi_{ijk} (E_{ii}\otimes E_{jj}\otimes E_{kk})$ for all
$1\le i\le n_1, 1\le j\le n_2$, $1\le k\le n_3$.

\vspace{0,2cm}
\it Proof. \rm By Claim 2, $\{u_{ijk}: 1\le i\le n_1, 1\le j\le n_2, 1\le k \le n_3\}$ forms an orthonormal basis and thus $U^*U = I_{N}$.
Next by Claim 1, $u_{rst}^* B_{ijk} u_{ \ell pq} = 0$ for all $(r,s,t)$ and $( \ell, p,q)$,
except the case when $(r,s,t) = ( \ell, p,q) = (i,j,k)$.
Therefore, the result follows.
\qed

\vspace{0,2cm}


Similarly, for any unitary $X\in M_{n_1}$, there exists some unitary $U_X$ and some complex units $\mu_{i_1i_2\cdots i_m}$ such that
$$\phi(XE_{i_1i_1}X^* \otimes E _{i_2i_2}\otimes \cdots \otimes E_{i_mi_m}) = \mu_{i_1i_2\cdots i_m} U_X(XE_{i_1i_1}X^* \otimes E _{i_2i_2}\otimes \cdots \otimes E_{i_mi_m})U_X^*$$
for all $1\le i_k\le n_k$ with $1\le k\le m$. We see that $\phi(XE_{i_1i_1}X^* \otimes E _{i_2i_2}\otimes \cdots \otimes E_{i_mi_m})$ is a rank one matrix with numerical radius one. If  $\gamma > 0$, then $$w(\phi((XE_{i_1i_1}X^*+\gamma I_{n_1})\otimes E _{i_2i_2}\otimes \cdots \otimes E_{i_mi_m})) = 1+\gamma.$$
Thus, $\phi(XE_{i_1i_1}X^* \otimes E _{i_2i_2}\otimes \cdots \otimes E_{i_mi_m})$
has the form $R \otimes E _{i_2i_2}\otimes \cdots \otimes E_{i_mi_m})$
for some $R \in M_{n_1}$. Since this is true for any unitary $X\in M_{n_1}$, we have $$\phi(A\otimes E _{i_2i_2}\otimes \cdots \otimes E_{i_mi_m}) = \varphi_{ i_2\cdots i_m}(A)\otimes E _{i_2i_2}\otimes \cdots \otimes E_{i_mi_m}$$
for all Hermitian matrices $A\in M_{n_1}$ and some linear map $\varphi_{ i_2\cdots i_m}$. Clearly, $ \varphi_{ i_2\cdots i_m}$ preserves numerical radius and, hence, has the form
$$A \mapsto \xi_{ i_2\cdots i_m} W_{ i_2\cdots i_m}AW_{ i_2\cdots i_m}^* \quad \hbox{ or } \quad A \mapsto \xi_{ i_2\cdots i_m}  W_{ i_2\cdots i_m}A^tW_{ i_2\cdots i_m}^*$$
for some complex unit $\xi_{ i_2\cdots i_m}$ and unitary $W_{ i_2\cdots i_m} \in M_{n_1}$. In particular, $\varphi_{ i_2\cdots i_m}(I_{n_1})=\xi_ { i_2\cdots i_m} I_{n_1}$ and
$\phi(I_N) = I_{n_1}\otimes D_1$ for some diagonal matrix $D_1\in M_{n_2\cdots n_m}$.

Given $2\leq k\leq m$ and using the same arguments as above, one can show that $\phi(I_N) = D_{k1}\otimes I_{n_k}\otimes D_{k2}$ for some diagonal matrix
$D_{k1}\in M_{n_1\cdots n_{k-1}}$ and $D_{k2}\in M_{n_{k+1}\cdots n_m}$. Since $$\phi(I_N) = I_{n_1}\otimes D_1 =D_{k1}\otimes I_{n_k}\otimes D_{k2}~{\rm for}~k=2,\ldots,m,$$
we conclude that  $\phi(I_N)=\xi I_N$ for some complex unit $\xi$. For the sake of the simplicity, let us assume that $\phi(I_N)= I_N$. Then
$$\phi(E_{i_1i_1} \otimes E _{i_2i_2}\otimes \cdots \otimes E_{i_mi_m}) =  E_{i_1i_1} \otimes E_{i_2i_2}\otimes \cdots \otimes E_{i_mi_m} $$
for all $1\le i_k\le n_k$ with $1\le k\le m$.

For any Hermitian matrix  $\A\in M_N$, suppose its spectral decomposition  is $$\A=X_1D_1X_1^*\otimes \cdots \otimes X_mD_mX_m^*.$$
Repeating the above argument  and using the assumption $\phi(I_{N})= I_{N}$, we can conclude that there exists a unitary matrix $U_{X_1,\ldots,X_m}$
such that $$\phi(X_1E_{i_1i_1}X_1^*\otimes \cdots\otimes X_mE_{i_mi_m}X_m^*)=U_{X_1,\ldots,X_m}(X_1E_{i_1i_1}X_1^*\otimes \cdots\otimes X_mE_{i_mi_m}X_m^*)U_{X_1,\ldots,X_m}^*$$ for all $1\le i_k\le n_k$ with $1\le k\le m$. By linearity, we have $$\phi(\A)=U_{X_1,\ldots,X_m}(\A)U_{X_1,\ldots,X_m}^*.$$
So, $\phi$ maps Hermitian matrices to Hermitian matrices and preserves numerical range on the tensor product of Hermitian matrices.
 By Theorem \ref{T5},  it has the asserted form on Hermitian matrices and, hence, on all matrices in $M_{n_1\cdots n_m}$.
If $n_i, n_j \ge 3$, we observe the matrices $A_i = X \oplus 0_{n_i-3}$ and  $A_j = X \oplus 0_{n_j-3}$, where $X$ is defined as in
Example \ref{Ex1}, and $A_k=E_{11}\in M_{n_k}$ for $k\ne i,j$, to conclude that $\varphi_i$ and $\varphi_j$ should both be the identity map,
or both be the transpose map. The proof is completed.
\end{proof}

\bigskip
\noindent{\bf Acknowledgment}

\noindent
This research was supported by a Hong Kong GRC grant PolyU 502411 with Sze as the PI.
The grant also supported the post-doctoral fellowship of Huang
and the visit of Fo\v{s}ner to the Hong Kong Polytechnic University in the summer of 2012.
She gratefully acknowledged the support and kind hospitality from the host university.
Fo\v sner was supported by the bilateral research program between Slovenia and US (Grant No. BI-US/12-13-023).
Li was supported by a GRC grant and a USA NSF grant; this research was done when he was
a visiting professor of the University of Hong Kong in the spring of 2012; furthermore,
he is an honorary professor of Taiyuan University of Technology (100 Talent Program scholar),
and an honorary professor of the  Shanghai University.

\baselineskip16pt




\begin{thebibliography}{WWW}


\bibitem{FLPS} S. Friedland, C.K. Li,  Y.T. Poon, and N.S. Sze,
The Automorphism Group of Separable States in Quantum Information Theory,
Journal of Mathematical Physics  52 (2011), 04220.

\bibitem{FHLS}
A. Fo\v sner, Z. Huang, C.K. Li, and N.S. Sze,
Linear preservers and quantum information science, Linear and Multilinear Algebra, to appear.
(Available at arXiv:1208.1076.)

\bibitem{FHLS2}
A. Fo\v sner, Z. Huang, C.K. Li, and N.S. Sze,
 Linear maps preserving Ky Fan norms and Schatten  norms of tensor product of matrices, preprint.
(Available at arXiv:1211.0396.)

\bibitem{G} K. E. Gustafson and D. K. M. Rao, Numerical range:
The Field of values of linear operators and matrices,
Universitext, Springer-Verlag, New York, 1997.


\bibitem{Ha} P.R. Halmos, A Hilbert Space Problem Book (Graduate Texts in Mathematics),
2nd edition, Springer, New York, 1982.

\bibitem{HJ}
R.A. Horn and C.R. Johnson, Topics in Matrix Analysis, Cambridge University Press, Cambridge, 1991.


\bibitem{J}  N. Johnston, Characterizing Operations Preserving Separability Measures via
Linear Preserver Problems,
Linear and Multilinear Algebra 59 (2011), 1171--1187.

\bibitem{L}  C.K. Li, Linear operators preserving the numerical radius of matrices, Proc. Amer. Math. Soc. 99 (1987),  601--608.

\bibitem{LP} C.K. Li and S. Pierce,
Linear preserver problems, Amer. Math. Monthly 108 (2001), 591--605.

\bibitem{LT} C.K. Li and N.K. Tsing,
Matrices with circular symmetry on their unitary orbits and $C$-numerical ranges,
Proc. Amer. Math. Soc. 111 (1991), 19--28.


\end{thebibliography}
\end{document}